\newtheorem{theorem}{Theorem}[section]
\newtheorem{proposition}[theorem]{Proposition}
\newtheorem{corollary}[theorem]{Corollary}
\newtheorem{lemma}[theorem]{Lemma}
\newtheorem{example}[theorem]{Example}
\newtheorem{remark}[theorem]{Remark}
\newtheorem{question}[theorem]{Question}
\newtheorem{definition}[theorem]{Definition}
\newenvironment{proof}{\noindent{\sc Proof.}}{\quad\qed\medskip}
\newcommand{\Z}{{\mathbb Z}}
\newcommand{\F}{{\cal F}}
\newcommand{\E}{{\cal E}}
\newcommand{\Hom}{{\rm Hom}}
\newcommand{\Aut}{{\rm Aut}}
\newcommand{\End}{{\rm End}}
\newcommand{\Ker}{{\rm Ker}\,}
\newcommand{\Gal}{{\rm Gal}}
\newcommand{\coGal}{{\rm coGal}}
\newcommand{\im}{{\rm Im}\,}
\newcommand{\Id}{{\rm Id}}
\newcommand{\qed}{\quad\lower0.05cm\hbox{$\Box$}}
\newcommand{\downarrowright}[1]{\downarrow
\rlap{\raise0.1cm\hbox{$\scriptstyle{#1}$}}}
\newcommand{\downarrowleft}[1]{\rlap{\kern-0.2cm
\raise0.1cm\hbox{$\scriptstyle{#1}$}}\downarrow}
\newcommand{\uparrowright}[1]{\uparrow
\rlap{\lower0.1cm\hbox{$\scriptstyle{#1}$}}}
\newcommand{\uparrowleft}[1]{\rlap{\kern-0.2cm
\lower0.1cm\hbox{$\scriptstyle{#1}$}}\uparrow}
\newcommand{\ra}{\rightarrow}
\newcommand{\epi}{\mbox{$\to$\hspace{-0.35cm}$\to$}}
\begin{document}
\setcounter{page}{1}
\title{Envelopes and covers for groups}
\footnotetext{
{\bf Mathematics Subject Classification (2010)}: Primary: 20K30, 20J05;  Secondary: 16D10, 55P60.

{\bf Keywords:} right-left approximations, envelopes, covers, cellular covers, co-localizations, groups.

 The first-named author was supported by grant MTM2013-46837-P and FEDER funds and grant 18394/JLI/13 by the Fundaci\'on S\'eneca-Agencia de Ciencia y Tecnolog\'{\i}a de la Regi\'on de Murcia in the framework of III PCTRM 2011-2014. Second-named author was supported by projects MTM2010-15831 and MTM2013-42178-P, funded by the Spanish Ministry of Economy and Competitiveness and by the Junta de Andalucia Grant FQM-213.}
\author{Sergio Estrada and Jos\'e L. Rodr\'{\i}guez}
\date{June 2nd, 2016}
\maketitle

\begin{abstract}
We connect work done by Enochs, Rada and Hill in module approximation theory with work undertaken
by several group theorists and algebraic topologists in the context of
homotopical localization and cellularization of spaces.
This allows one to consider envelopes and covers of arbitrary groups.
We show some characterizing results for certain classes of groups,  and
present some open questions.
\end{abstract}


\setcounter{equation}{0}

\section{Introduction}

A group homomorphism $\varphi: H \to G$ is called a {\it localization}  if it induces a bijection $\varphi^*:\End(G) \cong \Hom(H,G)$ given by $\varphi^*(f)=f\varphi$. Dually, it is called a {\it cellular cover} (or {\it co-localization}) if it induces a bijection $\varphi_*:\End(H) \cong \Hom(H,G)$, given by $\varphi_*(f)=\varphi f$.
Localizations and cellular covers of groups have been broadly studied
in recent years, motivated by their implications in homotopical localization theory. Special attention has been given to understanding whether some properties or structures are preserved under localization or cellular covers in the category of groups. An updated treatment of this subject (initiated in \cite{Cas00} and \cite{Lib00}) can be found in the recent book by G\"obel and Trlifaj \cite{GT12}
(also see references in \cite{CRT}).

In a few cases it is possible to give an explicit list of localizations of a fixed group $H$ or cellular covers of a fixed group $G$. For example, see the recent work by Blomgren, Chach\'olski, Farjoun, and Segev \cite{BCFS13}
where a complete classification of all cellular covers of each finite simple group is given.

However, in many other cases the classification is not possible, as we may obtain a proper class of solutions.
The use of infinite combinatorial principles, like Shelah's Black Box
and its relatives, has allowed one to produce either arbitrarily large localizations or cellular covers for certain groups. For instance, in \cite{GRS02} and \cite{GS02} the authors constructed large localizations of finite simple groups. Countable as well as arbitrarily large cellular covers of cotorsion-free abelian groups with given ranks have also been constructed (see \cite{BD06}, \cite{FGSS07} \cite{GRS12}). On the other hand, interesting new results have been achieved in \cite{GHS11} where G\"obel, Herden, and Shelah constructed absolute $E$-rings (localizations of $\Z$) of a size below the first Erd\"os cardinal. This approach has yielded the solution of an old problem by Fuchs. 

By relaxing the uniqueness property, some of the previous constructions could be adapted to find new envelopes and covers of groups. More precisely,  $\varphi: H \to G$ is an {\it envelope} if  $\varphi^*:\End(G) \to \Hom(H,G)$ is surjective and every endomorphism $f:G\to G$ such that $f\varphi= \varphi$ is an automorphism. On the other hand, $\varphi:H\to G$ is a {\it cover} if $\varphi_*:\End(H) \to \Hom(H,G)$ is surjective and  every endomorphism $f:H\to H$ such that $\varphi f= \varphi$ is an automorphism (see Section 2 for  more details). These notions would have interesting applications in related areas such as homotopical localization or module approximation theory (see Remark \ref{otherterminologies}).

Our motivation to write this article was to connect notions and tools from different contexts.
In particular, our aim is to connect the work by Enochs and Rada \cite{E05}, as well as Hill \cite{Hill} (where they considered
torsion free covers of abelian groups having trivial co-Galois group) to the work by  Buckner--Dugas \cite{BD06} and Farjoun--G\"obel--Shelah--Segev \cite{FGSS07}. Our Theorem \ref{trivial-co-galois} ensures
that, in fact, $\F$-covers of arbitrary groups with trivial co-Galois group are cellular covers for any class $\F$ of groups. 

A similar result holds for envelopes under certain assumptions. We show in Theorem~\ref{trivial-galois}
that if $H \to G$ is an $\F$-envelope with trivial Galois group,
and either $G$ is nilpotent or $H$ is abelian then
$H\to G$ is a localization.
The proof  is a tricky play of iterated commutators combined with the properties of envelopes.

Section \ref{examples} provides some more results,
examples and counterexamples of envelopes of groups. It also includes several open questions for further work 
involving the group of integers, finite $p$-groups and finite simple groups.


\section{Envelopes and covers for groups}
\label{envelopes-covers}
In this section we adapt some well known notions of envelopes and covers
of modules to the category of groups.
This yields a weaker version of (co)-localization theory
(see e.g. \cite{Bou77}, \cite{F97}, \cite{Hi03}),
where the class of (co)-local groups is replaced by
an arbitrary class of groups $\F$,
which is not closed under retractions, nor (co)-limits in general.


We follow Enochs and Xu's terminology (see e.g. \cite{E81},  \cite{Xu96}).
Let $\F$ be a class of groups closed under isomorphisms.
A group homomorphism $\varphi:H\to G$ is called an
{\it $\F$-preenvelope} of $H$ if $G\in \F$ and for all $F\in \F$
the induced map $\varphi^*:\Hom(G,F)\to \Hom(H,F)$ given by $\varphi^*(f)=f\varphi$ is surjective.
If in addition, every endomorphism $f:G\to G$ such that
$f \varphi=\varphi$ is an automorphism, then
$\varphi$ is called an {\it $\F$-envelope}.
{\it $\F$-precovers} and {\it $\F$-covers} with respect to a class of groups
$\F$ are defined dually.

Let $\varphi: H\to G$ be an $\F$-envelope of a group $H$.
The {\it Galois group} of $\varphi$ is defined as the subgroup
$\Gal(\varphi)\subseteq \Aut(G)$ consisting of those automorphisms
$f$ of $G$ such that $f \varphi=\varphi$.
The {\it co-Galois group} $\coGal(\varphi)$ of an $\F$-cover $\varphi: H\to G$
is defined dually.

Note that $\F$-envelopes or $\F$-covers are determined up to isomorphism,
whenever they exist.
A class of groups $\F$ is said to be {\it (pre)enveloping} if
every group admits an $\F$-(pre)envelope. Dually, it is said to be
{\it (pre)covering} if every group admits an $\F$-(pre)cover.

\begin{remark}
\label{otherterminologies}
{\rm
\begin{enumerate}
\item In module theory, Auslander and Smal{\o} \cite{AS80} used the names 
{\it left $\F$-approximation} instead of $\F$-preenvelope and
{\it minimal left $\F$-approximation} instead of $\F$-envelope.
Preenveloping classes are also called {\it covariantly finite subcategories}.
Dual terms are used for precovering classes.

\item
In any category, {\it injective hulls}
with respect to a given class of morphisms $I$
(which contains the identity morphism)
correspond exactly to $\F$-envelopes, where $\F$
is the class of $I$-injective objects (i.e. those objects $X$
such that $\Hom(B,X) \to \Hom(A,X)$ is surjective
for all $A\to B$ in $I$).
The dual claim for covers also holds (cf. \cite[Proposition 3]{Porst81}).

\item In category theory a preenveloping class $\F$ is known as a {\it weakly reflective subcategory} (see e.g. \cite{He11}).
For this, we choose an $\F$-preenvelope $\eta: X \to RX$ for every object $X$. This is a {\it weak reflection} onto $\F$. Note that this choice need not be functorial.
In this context, {\it weak factorization systems} give rise to envelopes and covers, when one factorizes morphisms into the terminal object, and morphisms from the initial object, respectively.
\end{enumerate}
}
\end{remark}

In this paper we focus on envelopes and covers for
certain groups. We exploit the following notion:
\begin{definition}
{\rm Given a group $H$, we say that a homomorphism $\varphi:H\to G$
is an {\it envelope} (of $H$), if there exists a class of groups
$\F$ such that $\varphi$ is an $\F$-envelope.
Dually $\varphi:H\to G$ is a {\it cover} (of $G$) if there exists a class of groups $\F$ such that $\varphi$ is an $\F$-cover.
}
\end{definition}

Observe that \cite[Lemma 2.1]{Cas00} and its obvious dual,
have their corresponding analogues for envelopes and covers:

\begin{proposition}
\label{charac-env}
\begin{enumerate}\item
A homomorphism $\varphi: H\to G$ is an envelope of $H$  if and only if
$\varphi^*: \End(G)\to \Hom(H,G)$ is surjective and
the preimage of $\varphi$ is contained in $\Aut(G)$.

\item A homomorphism $\varphi: H\to G$ is a cover of $G$ 
if and only if
$\varphi_*: \End(H)\to \Hom(H,G)$ is surjective and the
preimage of $\varphi$ is contained in $\Aut(H)$.
\end{enumerate}
\end{proposition}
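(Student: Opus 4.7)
The plan is to verify the two implications of (1) by directly unpacking the definitions; statement (2) then follows by the dual argument.

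For the forward direction, suppose $\varphi\dospunts H\to G$ is an envelope, so there exists a class $\F$ of groups (closed under isomorphisms) for which $\varphi$ is an $\F$-envelope. Since $G\in\F$, I may take $F=G$ in the surjectivity clause to conclude that $\varphi^*\dospunts \End(G)\to\Hom(H,G)$ is surjective. The preimage condition is then immediate from the second clause in the definition of $\F$-envelope: any $f\in\End(G)$ with $\varphi^*(f)=f\varphi=\varphi$ must lie in $\Aut(G)$.

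For the converse, the key observation is that I have complete freedom in the choice of the witnessing class $\F$; it is not fixed in advance. The natural candidate is $\F=\{G'\mid G'\cong G\}$, the isomorphism closure of $\{G\}$, which is automatically closed under isomorphisms and contains $G$. Given $F\in\F$ and any isomorphism $\alpha\dospunts G\to F$, post-composition with $\alpha$ yields a bijection $\alpha_*\dospunts \End(G)\to\Hom(G,F)$, and likewise $\alpha_*\dospunts \Hom(H,G)\to\Hom(H,F)$, in a way compatible with $\varphi^*$. Hence the hypothesised surjectivity of $\varphi^*\dospunts \End(G)\to\Hom(H,G)$ transports to the surjectivity of $\varphi^*\dospunts \Hom(G,F)\to\Hom(H,F)$ for every $F\in\F$. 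The remaining $\F$-envelope condition concerns only endomorphisms of $G$ and is precisely the preimage hypothesis.

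There is no real obstacle: the argument is essentially bookkeeping, and the only mildly subtle point is that the class $\F$ may be tailored to the specific morphism $\varphi$. Part (2) is obtained by the identical argument in the opposite direction, replacing $\varphi^*$ by $\varphi_*$ and interchanging the roles of $G$ and $H$.
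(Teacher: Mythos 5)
Your proof is correct and follows essentially the same route as the paper, which simply takes $\F=\{G\}$ (resp.\ $\F=\{H\}$) for the converse; your isomorphism-closure of $\{G\}$ is only a minor technical refinement to match the paper's convention that $\F$ be closed under isomorphisms, and the transport argument via $\alpha_*$ is routine.
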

\begin{proof} For the converses, in (1) take $\F=\{G\}$ and in (2) take $\F=\{H\}$.
It is clear that $\varphi$ is an $\F$-envelope in (1), and an $\F$-cover in (2).
\end{proof}

\begin{example}
{\rm
The inclusion $\Z/p^r\to \Z/p^\infty$ of the cyclic group of
order $p^r$ into the Pr\"ufer group is an $\F$-envelope, where
$\F$ is the class of all injective abelian groups.
On the other hand, the canonical homomorphism $\Z/p^r\to \Z/p^s$ is an envelope for all $r, s\geq 1$,
with $\F=\{\Z/p^s\}$ (see more examples in Section \ref{examples}). The localizations of $\Z/p^r$ are exactly the projections $\Z/p^r\to \Z/p^s$ with $s\leq r$ (see e.g. \cite{CRT}).
}
\end{example}

As in the case of modules, we have the following reduction for groups:

\begin{proposition}\label{reducction-injective-surjective}
Let $\F$ be any class of groups.
If $\varphi: H\to G$ is an $\F$-envelope, then
$\im(\varphi)\hookrightarrow G$ is an $\F$-envelope
with the same Galois group.
Dually, if $\varphi: H\to G$ is an $\F$-cover, then
$H\twoheadrightarrow \im(\varphi)$ is also an $\F$-cover with the same co-Galois
group.
\qed\end{proposition}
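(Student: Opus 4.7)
The plan is to factor $\varphi$ through its image as $\varphi = i \circ \pi$, where $\pi \colon H \twoheadrightarrow \im(\varphi)$ is the canonical surjection and $i \colon \im(\varphi) \hookrightarrow G$ the canonical inclusion, and then transfer each of the defining properties of an $\F$-envelope (resp.\ $\F$-cover) from $\varphi$ to $i$ (resp.\ to $\pi$) by exploiting the epi/mono nature of the two factors.

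For the envelope claim, I first note that $G \in \F$ holds automatically since $\varphi$ is an $\F$-envelope, so $i$ has the correct target. To see that $i$ is an $\F$-preenvelope, I would take any $g \colon \im(\varphi) \to F$ with $F \in \F$ and apply the preenvelope property of $\varphi$ to the composite $g\pi \colon H \to F$, obtaining $h \colon G \to F$ with $h \varphi = g \pi$, i.e.\ $(h i)\pi = g \pi$; since $\pi$ is surjective, cancellation gives $h i = g$. For the minimality condition, any $f \in \End(G)$ with $f i = i$ satisfies $f \varphi = f i \pi = i \pi = \varphi$, so $f \in \Aut(G)$ because $\varphi$ is an $\F$-envelope. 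The very same equivalences show $\Gal(i) = \Gal(\varphi)$: both are subgroups of $\Aut(G)$, and the identities $f i = i \Leftrightarrow f i \pi = i \pi \Leftrightarrow f \varphi = \varphi$ (using surjectivity of $\pi$) identify the two sets of automorphisms.

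The cover statement is handled dually, now using the injectivity of $i$ in place of the surjectivity of $\pi$. Here $H \in \F$ gives $\pi$ the correct source. Given $F \in \F$ and $g \colon F \to \im(\varphi)$, the precover property of $\varphi$ applied to $i g$ yields $h \colon F \to H$ with $\varphi h = i g$, i.e.\ $i(\pi h) = i g$; injectivity of $i$ gives $\pi h = g$. If $f \in \End(H)$ satisfies $\pi f = \pi$, then $\varphi f = i \pi f = i \pi = \varphi$, so $f \in \Aut(H)$. The identification $\coGal(\pi) = \coGal(\varphi)$ proceeds along the symmetric chain of equivalences.

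There is no real obstacle: the whole argument is a short diagram chase, and the only thing to remember is the direction in which cancellation is used (surjectivity of $\pi$ for the envelope half, injectivity of $i$ for the cover half). The mild conceptual point worth flagging is that the proposition does not require $\im(\varphi)$ itself to belong to $\F$ — neither $i \colon \im(\varphi) \hookrightarrow G$ as an $\F$-envelope nor $\pi \colon H \twoheadrightarrow \im(\varphi)$ as an $\F$-cover has such a demand in its definition, since in each case the membership condition concerns only the codomain (resp.\ domain) $G$ (resp.\ $H$) which already lies in $\F$.
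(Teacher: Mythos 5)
Your argument is correct and complete: factoring $\varphi = i\pi$ and cancelling $\pi$ on the right (surjectivity) for the envelope half and $i$ on the left (injectivity) for the cover half is exactly the routine diagram chase the paper has in mind, which is why it states the proposition with no written proof. Your closing remark that $\im(\varphi)$ need not itself lie in $\F$ is also accurate, since the membership requirement in the definitions concerns only $G$ (resp.\ $H$).
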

On the other hand, surjective preenvelopes and monomorphic precovers are completely described by socles and radicals as we observe next.

 Recall that given a class $\E$ of group epimorphisms, the
{\it $\E$-socle} $S_\E (H)$ of a group $H$ is the
(normal) subgroup generated by $\psi(\Ker \varphi)$ where $\psi: E\to H$ and $\varphi: E \epi E'$ is in $\E$. If $\F$ is any class of groups and
$\E=\{\varphi:F\to 1,\ F\in {\mathcal F}\}$, then the $\E$-socle of $H$ is called the $\F$-socle of $H$.
The {\it $\E$-radical} $T_\E$ is the union
$T_\E(H)= \bigcup_i T^i$ of a continuous chain starting at
$T^0=S_\E(H)$ and defined inductively by $T^{i+1}/T^i=S_\E(H/T^{i})$,
and $T^\lambda=\bigcup_{\alpha<\lambda}  T^\alpha$ if $\lambda$ is a limit ordinal.
The quotient $H \epi H/T_\E (H)$ is the {\it epireflection} with respect to $\E$ (see e.g. \cite{RS00}).



\begin{proposition}
\label{socles-covers}
Let $\mathcal F$ be any class of groups. Then, the following are equivalent:
\begin{itemize}
\item[a) ] $H\hookrightarrow G$ is an $\mathcal F$-precover.
\item[b) ] $H\hookrightarrow G$ is an $\mathcal F$-cover having unique liftings.
\item[c) ] $H$ is the $\mathcal F$-socle of $G$.\qed
\end{itemize}
\end{proposition}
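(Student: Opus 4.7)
The plan is to show (a) $\Leftrightarrow$ (b) and (a) $\Leftrightarrow$ (c), exploiting throughout that an inclusion $i\colon H \hookrightarrow G$ is a monomorphism, which trivializes several verifications. The direction (b) $\Rightarrow$ (a) is immediate, since a cover is in particular a precover. For (a) $\Rightarrow$ (b), I would note that since $i$ is injective, any lifting $\tilde g\colon F \to H$ of a map $g\colon F \to G$ (with $F \in \mathcal{F}$) is automatically unique: $i \tilde g_1 = i \tilde g_2$ forces $\tilde g_1 = \tilde g_2$. Likewise, any endomorphism $f\colon H \to H$ satisfying $i f = i$ sends each $h\in H$ to itself and so equals $\mathrm{id}_H$, which is trivially an automorphism. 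Hence a precover of inclusion form is automatically a cover with unique liftings.

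For (a) $\Leftrightarrow$ (c), I would first unpack the definition of the $\mathcal{F}$-socle: with $\mathcal{E} = \{F \to 1 : F \in \mathcal{F}\}$, the kernel of each such map is $F$ itself, so $S_\mathcal{F}(G)$ is the (normal) subgroup of $G$ generated by the images $\psi(F)$ of all homomorphisms $\psi\colon F \to G$ with $F \in \mathcal{F}$. Given (a), the precover definition forces $H \in \mathcal{F}$, so the inclusion $i$ contributes the image $H$ among these generators, giving $H \subseteq S_\mathcal{F}(G)$; conversely, the precover property lifts every $\psi\colon F \to G$ through $H$, so $\psi(F) \subseteq H$, yielding $S_\mathcal{F}(G) \subseteq H$. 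For the reverse implication, if $H = S_\mathcal{F}(G)$ then any $\psi\colon F \to G$ with $F\in \mathcal{F}$ has image in $S_\mathcal{F}(G) = H$, and so factors through the inclusion, which is exactly the precover property.

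The proof is essentially definition-chasing, so no step is genuinely difficult; the only subtle point I expect to flag is that (c) has to be read as implicitly including $H \in \mathcal{F}$ (otherwise neither (a) nor (b) could hold), since for an arbitrary class $\mathcal{F}$ the socle $S_\mathcal{F}(G)$ need not itself belong to $\mathcal{F}$. Once this is granted, the equivalences fall out directly from the observations above.
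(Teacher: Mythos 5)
Your proof is correct; the paper states this proposition without proof (the \qed after the statement marks it as immediate), and your definition-chasing argument — injectivity of the inclusion giving unique liftings and forcing any $f$ with $if=i$ to be the identity, plus the two socle containments — is exactly the intended one. Your flagged caveat, that (c) must implicitly carry $H\in\mathcal{F}$ because for an arbitrary class the socle $S_{\mathcal F}(G)$ need not itself belong to $\mathcal{F}$ (e.g.\ $\mathcal{F}=\{\Z/2,\Z/3\}$, $G=\Z/6$), is a genuine imprecision in the literal statement rather than in your argument, and it is right to record it.
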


Recall some terminology of orthogonal pairs from localization theory. Given a class of groups $\F$ we denote by $^\perp \F$ the class of homomorphisms $g:A\to B$ that induce a bijection
$g^*:\Hom(B,F)\cong \Hom(A,F)$ for all $F\in \F$, where $g^*(h)=hg$. For a class $\E$ of homomorphisms,
$\E^\perp$ denotes the class of groups $G$ such that $g^*:\Hom(B,G)\cong \Hom(A,G)$ for all $g\in \E$.

\begin{proposition}
\label{radical-envelopes}
Let $\mathcal F$ be any class of groups. Then, the following are equivalent:
\begin{itemize}
\item[a) ] $H\twoheadrightarrow G$ is an $\mathcal F$-preenvelope.
\item[b) ] $H\twoheadrightarrow G$ is an $\mathcal F$-envelope having unique
liftings.
\item[c) ] $H\twoheadrightarrow G$ is an $\E$-epireflection onto $G\in \F$, where $\E=(^\perp \F)\cap Epi$.
\end{itemize}
\end{proposition}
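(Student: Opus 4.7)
The proof turns on two simple facts: a surjective group homomorphism is an epimorphism in the category of groups, so any factorization through it is automatically unique; and the $\E$-radical $T_\E(H)$ is by construction the smallest normal subgroup whose quotient kills every $\E$-kernel mapped into $H$. For \textbf{(a)}$\Leftrightarrow$\textbf{(b)}: if $\varphi\dospunts H\twoheadrightarrow G$ is surjective and $g_1\varphi=g_2\varphi$, then $g_1=g_2$ by epi cancellation, so a surjective preenvelope has unique liftings for free; moreover, if $f\dospunts G\to G$ satisfies $f\varphi=\varphi=\Id_G\varphi$, then $f=\Id_G$ is trivially an automorphism, so the preenvelope is automatically an envelope. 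The converse is immediate, an envelope being a preenvelope by definition.

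For \textbf{(a)}$\Rightarrow$\textbf{(c)}: assume $\varphi$ is an $\F$-preenvelope. Since $\varphi$ is surjective, $\varphi^*$ is injective on $\Hom(G,F)$ for every $F$ and surjective for $F\in\F$ by hypothesis, so $\varphi\in({}^\perp\F)\cap Epi=\E$. The inclusion $\Ker\varphi\subseteq S_\E(H)\subseteq T_\E(H)$ follows by taking $\psi=\Id_H$ and $\varphi$ itself in the socle definition. For the reverse inclusion, I would prove by transfinite induction on $i$ that $T^i\subseteq\Ker\varphi$ at every stage, so that $\varphi$ descends to $\bar\varphi\dospunts H/T^i\to G$. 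At the successor step, for any $\alpha\dospunts E\to H/T^i$ and any $\psi\dospunts E\twoheadrightarrow E'$ in $\E$, the composite $\bar\varphi\alpha\dospunts E\to G$ factors through $\psi$ by orthogonality (since $G\in\F$ and $\psi\in{}^\perp\F$), giving $\alpha(\Ker\psi)\subseteq\Ker\bar\varphi$, whence $S_\E(H/T^i)\subseteq\Ker\bar\varphi$ and $T^{i+1}\subseteq\Ker\varphi$; limit ordinals pose no issue since the $T^\lambda$ are defined by unions. Thus $\Ker\varphi=T_\E(H)$, identifying $\varphi$ with the $\E$-epireflection onto $G\in\F$.

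For \textbf{(c)}$\Rightarrow$\textbf{(a)}: if $\varphi$ is the $\E$-epireflection and $G\in\F$, then for each $h\dospunts H\to F$ with $F\in\F$ the same transfinite argument (with $h$ in place of $\varphi$) gives $T_\E(H)\subseteq\Ker h$, so $h$ factors through $\varphi$ and $\varphi$ is a preenvelope. The main obstacle throughout is the transfinite induction establishing $T_\E(H)\subseteq\Ker\varphi$: one must carefully transport the orthogonality argument down to each quotient $H/T^i$ and handle limit ordinals correctly. Everything else reduces to epi cancellation.
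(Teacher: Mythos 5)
Your argument is correct, but the route you take for the equivalence with (c) differs from the paper's. For (a)$\Leftrightarrow$(b) you argue exactly as the paper does (epi cancellation). For the link with (c), however, the paper stays entirely inside the orthogonality formalism: from (b) it deduces $f\in{}^\perp\F$, hence $f\in\E$, and then uses $G\in\F\subset({}^\perp\F)^\perp\subset\E^\perp$ to conclude that $G$ is $\E$-local and that $f$, being an $\E$-equivalence into an $\E$-local group, \emph{is} the $\E$-epireflection; the converse is the same argument read backwards. This implicitly invokes the standard characterization of the epireflection (as the universal $\E$-equivalence onto an $\E$-local group, cf.\ the reference to \cite{RS00}) rather than its construction via the radical. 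You instead unpack that construction: you identify $\Ker\varphi$ with $T_\E(H)$ directly, getting $\Ker\varphi\subseteq S_\E(H)$ from $\varphi\in\E$ itself, and $T_\E(H)\subseteq\Ker\varphi$ by transfinite induction on the chain $T^i$, using at each successor step that maps in $\E$ are orthogonal to $G\in\F$; the same induction gives (c)$\Rightarrow$(a). Your version is longer but self-contained, essentially re-proving the universal property of the epireflection at the level of socles and radicals, whereas the paper's is shorter at the cost of quoting that property. Both are sound.
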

\begin{proof}
Denote $f:H\twoheadrightarrow G$.
Clearly (a) is equivalent to (b), since $f$ is an epimorphism.
Suppose (b). Then $f$ is in $^\perp \F$ because $f$ is an $\F$-envelope with unique liftings, thus $f\in \E$.
On the other hand, $G\in \F \subset (^\perp \F)^\perp \subset \E^\perp$, hence $G$ is $\E$-local, and $f$ is the $\E$-epireflection, hence (c) holds. Conversely, suppose (c). This implies that $G$ is $\E$-local and $f$ is an $\E$-equivalence. Therefore, $f\in (^\perp \F)$, i.e. all groups in $\F$ are $f$-local. We conclude that $f$ is an $\F$-envelope, which shows (b).
\end{proof}

In all these cases the (co)-Galois groups are trivial, but this holds of course for arbitrary (co)-localizations (see \cite{Bou77} or \cite{Cas00} for notation). 

\begin{proposition}
\label{localizations-are-envelopes}
Let $(L,\eta)$ be a localization functor, and let
$\F$ be the class of $L$-local groups.
Then $\eta_H:H\to LH$ is an $\F$-envelope
with $Gal(\eta_H)=\{\Id_{LH}\}$ for every group $H$.

The dual result for co-localizations functors also holds.\qed
\end{proposition}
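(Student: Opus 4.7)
The plan is to exploit the universal property built into the definition of a localization functor. Recall that $(L,\eta)$ being a localization functor means $L:\grp\to \grp$ is idempotent with a natural transformation $\eta:\Id\Rightarrow L$, and that the $L$-local groups are precisely those $F$ for which $\eta_F$ is an isomorphism; equivalently, $F$ is $L$-local if and only if $\eta_X^*:\Hom(LX,F)\to \Hom(X,F)$ is a bijection for every group $X$. So the proof reduces to carefully unpacking this universal property.

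First, I would note that $LH\in \F$ by idempotency. For the preenvelope condition, fix any $F\in\F$; by the characterization above, $\eta_H^*:\Hom(LH,F)\to \Hom(H,F)$ is a bijection and in particular surjective, so $\eta_H:H\to LH$ is an $\F$-preenvelope.

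To promote this to an envelope with trivial Galois group in a single stroke, apply the bijection with $F=LH$ (which lies in $\F$). If $f\in \End(LH)$ satisfies $f\eta_H=\eta_H$, then $\eta_H^*(f)=\eta_H^*(\Id_{LH})$, and injectivity of $\eta_H^*$ forces $f=\Id_{LH}$. In particular every such $f$ is an automorphism, so $\eta_H$ is an $\F$-envelope, and $\Gal(\eta_H)=\{\Id_{LH}\}$.

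The only real point of care is invoking the correct universal property of localization at the start; once the bijection $\Hom(LH,F)\cong\Hom(H,F)$ is in hand, both conclusions are formal consequences, and I do not expect any genuine obstacle. The dual statement for a co-localization functor $(C,\varepsilon)$ is obtained by reversing arrows: the natural map $\varepsilon_G:CG\to G$ is an $\F$-cover onto the class of $C$-colocal groups, with $\coGal(\varepsilon_G)=\{\Id_{CG}\}$, by the same argument applied to $\varepsilon_G^*:\Hom(CG,CG)\to \Hom(CG,G)$.
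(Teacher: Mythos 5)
Your argument is correct and is exactly the standard one the paper has in mind (the proposition is stated there without proof, as an immediate consequence of the orthogonality $\Hom(LH,F)\cong\Hom(H,F)$ for $L$-local $F$): surjectivity gives the preenvelope, and injectivity with $F=LH$ gives $\Gal(\eta_H)=\{\Id_{LH}\}$. Only a notational quibble on the dual part: the relevant map is $(\varepsilon_G)_*:\Hom(CG,CG)\to\Hom(CG,G)$, $f\mapsto\varepsilon_G f$ (post-composition), not $\varepsilon_G^*$, but the argument you describe is the right one.
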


\section{Envelopes and covers with trivial Galois groups}
\label{envelopes-covers-trivial-Galois-group}

We next show the converse of Proposition \ref{localizations-are-envelopes} for covers, i.e.
that covers of groups with trivial co-Galois groups
are cellular covers (cf. Lemma 2.2 in \cite{E05} for modules).
This links work done by Enochs--Rada \cite{E05} and
Hill \cite{Hill} on torsion free covers of abelian groups with trivial co-Galois group, and recent work on cellular
covers by Chach\'olski--Farjoun--G\"obel--Segev \cite{CFGS07}, and Buckner--Dugas \cite{BD06}.

\begin{theorem}
\label{trivial-co-galois}
Let $\pi: H\to G$ be any cover of groups. Then the following
are equivalent:
\begin{enumerate}
\item[(a)] $\coGal(\pi)=\{{\rm Id}_H\}$.
\item[(b)] $\pi: H\to G$ is a cellular cover.
\end{enumerate}
\end{theorem}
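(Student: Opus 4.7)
The plan is to handle the two implications separately, with (b)~$\Rightarrow$~(a) immediate and (a)~$\Rightarrow$~(b) requiring the bulk of the work. For (b)~$\Rightarrow$~(a), if $\pi_{*}\dospunts\End(H)\to\Hom(H,G)$ is a bijection, then for any $\alpha\in\coGal(\pi)$ the equality $\pi_{*}(\alpha)=\pi=\pi_{*}(\Id_{H})$ at once forces $\alpha=\Id_{H}$.

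For (a)~$\Rightarrow$~(b), I would first combine Proposition~\ref{charac-env}(2) (which gives $\pi_{*}^{-1}(\pi)\subseteq\Aut(H)$) with the hypothesis $\coGal(\pi)=\{\Id_{H}\}$ to deduce that the only endomorphism $e$ of $H$ satisfying $\pi e=\pi$ is $\Id_{H}$. Since $\pi_{*}$ is already surjective by the cover hypothesis, it remains to upgrade this uniqueness at the single fibre over $\pi$ to injectivity of $\pi_{*}$ on all of $\End(H)$.

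The pivotal observation, which I would prove next, is that $\ker\pi\subseteq Z(H)$: for each $k\in\ker\pi$ the inner automorphism $c_{k}$ of $H$ defined by $c_{k}(h)=khk^{-1}$ satisfies $\pi c_{k}=\pi$, hence $c_{k}\in\coGal(\pi)=\{\Id_{H}\}$, forcing $k$ to be central. With this in hand, take $f_{1},f_{2}\in\End(H)$ with $\pi f_{1}=\pi f_{2}$ and define $d\dospunts H\to H$ by $d(h)=f_{1}(h)f_{2}(h)^{-1}$. The values of $d$ lie in the central subgroup $\ker\pi$, and that very centrality is what makes $d$ a group homomorphism. One then checks that $e(h):=d(h)\cdot h$ defines an endomorphism of $H$ (because $d(h')\in Z(H)$ commutes with~$h$) and that $\pi e=\pi$; the preceding paragraph therefore forces $e=\Id_{H}$, whence $d\equiv 1$ and $f_{1}=f_{2}$. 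Injectivity of $\pi_{*}$ follows, and together with surjectivity this gives the desired bijection, i.e.\ $\pi$ is a cellular cover.

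The main obstacle is that the elegant additive trick used in the module-theoretic analogue \cite[Lemma~2.2]{E05}---where one simply takes $e=\Id+(f_{1}-f_{2})$---is not available for groups. The centrality of $\ker\pi$ forced by the hypothesis $\coGal(\pi)=\{\Id_{H}\}$ is exactly the structural input that enables the multiplicative construction $e(h)=d(h)\cdot h$ to replace that additive one; recognising that this centrality is hidden in the triviality of the co-Galois group is the heart of the argument.
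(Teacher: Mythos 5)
Your proof is correct and takes essentially the same approach as the paper's: you extract centrality of $\Ker\pi$ from the triviality of $\coGal(\pi)$ via inner automorphisms, and your correction map $e(h)=d(h)\cdot h$ is exactly the paper's $\psi'(x)=x\, i(\psi(x))$, with the cover property plus trivial co-Galois group forcing it to be the identity. The only cosmetic difference is that you prove injectivity of $\pi_{*}$ directly from a pair $f_1,f_2$, whereas the paper shows that every homomorphism $H\to\Ker\pi$ is trivial and leaves that last identification implicit.
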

\begin{proof}
Let $K=\Ker \pi$. By Proposition \ref{reducction-injective-surjective} we can assume
that we have a short exact sequence
$$1\to K\stackrel{i}{\longrightarrow} H\stackrel{\pi}{\longrightarrow} G\to 1.$$
First remark that if $\coGal(\pi)=1$ then $i(K)$ is central in $H$.
Indeed, let $x\in K$ and define $\psi: H\to H$ by $\psi(y)= i(x) y i(x)^{-1}$.
Then, $\pi \psi = \pi$ and $\coGal(\pi)=1$ implies $\psi=\Id_H$, i.e. $i(K)$
central in $H$.

Let $\psi: H\to K$ be any homomorphism. Then define $\psi': H\to H$ by
$\psi'(x)= x i(\psi(x))$, which is a well defined homomorphism because $i(K)$
is central. Composing by $\pi$ we get $\pi \psi' = \pi$, hence again
by hypothesis $\psi'=\Id_H$ which says precisely that $\psi=1$.

The other implication is obvious.
\end{proof}

As we will prove in Theorem~\ref{trivial-galois}, the converse of Proposition \ref{localizations-are-envelopes} for envelopes holds if we assume that
$H$ is abelian or $G$ is nilpotent.
We do not know whether or not it remains true if $H$ is nilpotent (even for nilpotency class 2).
Let $ZG$ be the center of $G$ and  $\Gamma^r G$ be its lower central series, defined inductively by $\Gamma^2G=[G,G]$, $\Gamma^{r+1}G=[\Gamma^r G,G]=[G,\Gamma^rG]$.

We use the convention for commutators $[x,y]:=xyx^{-1}y^{-1}$, and for the iterated left-normed commutator $[y_1,y_2,...,y_r]:=[...[[y_1,y_2],...,y_{r-1}],y_r] \in \Gamma^{r}G$. Recall the following identities $[a,xy]=[a,x][x,[a,y]][a,y]$ and $[xy,z]=[y,z][[y,z],x][x,z].$ Combininig these we have:
$$[abc, z_1,\cdots, z_j] =[[(ab)c,z_1],z_2,\cdots,z_j]= [[c,z_1]\;\; [[c,z_1],  ab] \;\; [ab,z_1], z_2,\cdots, z_j]=
$$
\begin{equation}
\label{abc}
[[c,z_1]    \;\;      [[c,z_1],a]  \;\;   [a, [[c,z_1], b]]  \;\;   [[c,z_1],b]  \;\;   [b,z_1]  \;\;    [[b,z_1],a]  \;\;     [a,z_1], z_2, \cdots, z_j]
\end{equation}
Let $Z_j =Z_jG$ be the $j$-term of the upper lower central series of $G$, where $Z_1=ZG$, and $Z_{j+1}=Z(G/Z_j)$.
Note that $x\in Z_j$ if and only if $[x, G,\stackrel{(j)}{\cdots},G]=1$. We call these elements {\it $j$-central}.
For later use, we recall the following known properties:
\begin{lemma} 
\label{centrals} 
Let $j\geq 1$ and $a, b, c$ elements of a group $G$.
\begin{enumerate}
\item If $b\in Z_j G$, then $[abc,z_1,\cdots, z_j]= [ac,z_1,\cdots, z_j]$
\item  If $b\in Z_{j+1}G$, then $[abc,z_1,\cdots, z_j]= [acb,z_1,\cdots, z_j]=[[ac,z_1][b,z_1],z_2, \cdots, z_j]=\cdots=[ac,z_1,\cdots,z_j][b,z_1,\cdots, z_j]$
\end{enumerate}
\end{lemma}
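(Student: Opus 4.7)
The plan is to prove (1) by induction on $j$ using the expansion formula (3.1), and then to derive (2) as a chain of three equalities, each of which I would reduce either to part (1) or to an auxiliary distributivity sublemma. The central tool throughout will be the filtration by the upper central series, in particular the inclusion $[Z_k,G]\subseteq Z_{k-1}$, together with the fact that each $Z_{k-1}$ is normal and that $Z_k/Z_{k-1}$ is precisely the center of $G/Z_{k-1}$.

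For part (1), the base case $j=1$ is direct: when $b$ is central, writing $abc=b\cdot ac$ and applying the identity $[xy,z_1]=[y,z_1][[y,z_1],x][x,z_1]$ with $x=b,\,y=ac$ makes the two terms involving $b$ vanish, leaving $[abc,z_1]=[ac,z_1]$. For the inductive step I would apply (3.1) to write
\[
[abc,z_1]=[c,z_1][[c,z_1],a]\cdot E\cdot [a,z_1],
\]
where $E=[a,[[c,z_1],b]]\,[[c,z_1],b]\,[b,z_1]\,[[b,z_1],a]$. Since $b\in Z_j$, each of the four factors of $E$ lies in $Z_{j-1}$ (using $[Z_j,G]\subseteq Z_{j-1}$ once or twice as appropriate), hence $E\in Z_{j-1}$. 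Setting $\alpha=[c,z_1][[c,z_1],a]$, I rewrite $[abc,z_1]=(\alpha E\alpha^{-1})\cdot[ac,z_1]=\gamma\cdot[ac,z_1]$ with $\gamma\in Z_{j-1}$ by normality, and then applying $[-,z_2,\ldots,z_j]$ together with the inductive hypothesis (take $a'=1$, $b'=\gamma$, $c'=[ac,z_1]$ on the shifted list $z_2,\ldots,z_j$) absorbs $\gamma$.

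For part (2) I would handle the chain one equality at a time. The first, $[abc,\ldots]=[acb,\ldots]$, follows by writing $abc=a[b,c]cb$; since $b\in Z_{j+1}$ yields $[b,c]\in Z_j$, part (1) applies. For the second equality, I expand $[acb,z_1]=[b,z_1][[b,z_1],ac][ac,z_1]$; because $[b,z_1]\in Z_j$ is central modulo $Z_{j-1}$ and $[[b,z_1],ac]\in Z_{j-1}$, reordering in the quotient $G/Z_{j-1}$ yields $[acb,z_1]=[ac,z_1][b,z_1]\cdot\epsilon$ with $\epsilon\in Z_{j-1}$, and part (1) absorbs $\epsilon$ once $[-,z_2,\ldots,z_j]$ is taken. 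The third equality requires a \emph{distributivity sublemma}: if $q\in Z_k$ and $m\geq k-1$, then $[pq,z_1,\ldots,z_m]=[p,z_1,\ldots,z_m][q,z_1,\ldots,z_m]$, which I would prove by induction on $k$ by the same technique (expand $[pq,z_1]$, reorder modulo $Z_{k-2}$, apply part (1) and the inductive hypothesis), and then invoke with $p=[ac,z_1]$, $q=[b,z_1]\in Z_j$, $k=j$, $m=j-1$. The main difficulty will be the bookkeeping across the two nested inductions: at each step one must verify that the ``error'' element lies in a small enough $Z_i$ relative to the number of remaining commutator entries, so that part (1) really does eliminate it at the level it is invoked.
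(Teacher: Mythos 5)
Your argument is correct, and it uses the same basic toolbox as the paper (the expansion of $[abc,z_1]$, the filtration $[Z_k,G]\subseteq Z_{k-1}$, induction on the central height), but part (2) is organized along a genuinely different route. The paper proves (1) and (2) by a simultaneous induction on $j$: it expands both $[abc,z_1,\ldots,z_j]$ and $[acb,z_1,\ldots,z_j]$ via (3.1), eliminates the $b$-dependent factors one at a time by the inductive hypothesis, and needs a separate explicit computation to show that the factor $[[c,z_1],b]$ lies deep enough to be discarded, before concluding the distributivity by iterating the centrality of $[b,z_1,\ldots,z_i]$. You instead prove (1) on its own (collecting the four $b$-dependent factors into a single block $E\in Z_{j-1}$, conjugating it to the front, and applying the inductive hypothesis once with $a'=1$ --- equivalent to the paper's term-by-term elimination), and then derive (2) non-inductively from (1): the identity $abc=a[b,c]cb$ with $[b,c]\in Z_j$ gives $[abc,\ldots]=[acb,\ldots]$ immediately, which is cleaner than the paper's comparison of the two expansions and avoids its messiest computation; the reordering modulo $Z_{j-1}$ gives the middle expression; and your distributivity sublemma ($q\in Z_k$, $m\geq k-1$ implies $[pq,z_1,\ldots,z_m]=[p,z_1,\ldots,z_m][q,z_1,\ldots,z_m]$, proved by induction on $k$ using (1)) is a correct and more explicit packaging of the paper's final ``iterate'' step, and its invocation with $p=[ac,z_1]$, $q=[b,z_1]$, $k=j$, $m=j-1$ is exactly within its hypotheses. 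The only points worth tightening are cosmetic: state the trivial case $j=1$ of (2) explicitly (your reduction steps need $j\geq 2$ for the absorption via (1) on the list $z_2,\ldots,z_j$), and note that, like the paper, you are taking the displayed identity $[xy,z]=[y,z][[y,z],x][x,z]$ and equation (3.1) at face value; your conjugation-and-absorb mechanism is in fact robust to the precise form of that expansion, since all that matters is that the $b$-dependent factors lie in $Z_{j-1}$ and the remaining factors assemble to $[ac,z_1]$.
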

\begin{proof}
We proceed by induction on $j$. For $j=1$ both (1) and (2) are obvious.
Suppose by induction that (1) holds for all $j$-central elements, and suppose that $b$ is $(j+1)$-central. 
Then $ [[c,z_1], b]$, and $[b,z_1]$ $\in Z_j$, and by induction they can be eliminated in (\ref{abc}):
$$[abc, z_1,\cdots, z_{j+1}]= 
[[c,z_1]  \;\;    [[c,z_1],a]   \;\;     [a,z_1], z_2, \cdots, z_{j+1}]= [[ac, z_1],z_2, \cdots, z_{j+1}]=$$ $$=[ac, z_1,z_2, \cdots, z_{j+1}].
$$
To see (2), let us assume that it holds for all $j$-central elements, and suppose that $b$ is $(j+1)$-central.
Now in (\ref{abc}) we have that $[[c,z_1],b]$ and $[b,z_1]$ are $j$-central. By induction,
$$
[abc,z_1,\cdots, z_j]=[[c,z_1]  \;\;    [[c,z_1],a]  \;\;  [a,z_1]  \;\;  [[c,z_1],b] \;\; [b,z_1], z_2, \cdots, z_j]=
$$
\begin{equation}
\label{commutator-abc}
=[[c,z_1]  \;\;    [[c,z_1],a]  \;\;  [a,z_1]  \;\;  [b,z_1], z_2, \cdots, z_j]
\end{equation}
In the last equality, we have used that $[[c,z_1],b] \in Z_{j-1}$, and hence it can be eliminated.
Indeed, to see that, we apply induction of (2) with the elements $[b^{-1}, z_1^{-1}]$ and $[c^{-1},b] \in Z_jG$:
$$
[ [[c,z_1],b], z_2,\cdots, z_j] =[[c,z_1]b[c,z_1]^{-1}b^{-1}, z_2,\cdots, z_j] 
=[ cz_1c^{-1}z_1^{-1}bz_1cz_1^{-1}c^{-1}b^{-1}, z_2,\cdots, z_j]
$$
$$
=[cz_1c^{-1}b[b^{-1},z_1^{-1}]cz_1^{-1}c^{-1}b^{-1}, z_2,\cdots, z_j]
= [cz_1[c^{-1},b] z_1^{-1}bz_1z_1^{-1}c^{-1}b^{-1}, z_2,\cdots, z_j]
$$
$$
= [cz_1z_1^{-1}c^{-1}bcb^{-1}bc^{-1}b^{-1}, z_2,\cdots, z_j]
= \cdots=1
$$

On the other hand,
$$ [(ac)b, z_1, \cdots, z_j]=[ [b,z_1]\;\; [[b,z_1],ac] \;\; [ac,z_1], z_2, \cdots, z_j]
$$
$$
[ [b,z_1]\;\; [[b,z_1],a] \;\;   [  a, [[b,z_1],c] ] \;\;  [[b,z_1],c] \;\; [c,z_1] \;\; [[c,z_1],a] \;\;  [a,z_1], z_2, \cdots, z_j].
$$
\begin{equation}
\label{commutator-acb}
[ [b,z_1] \;\;  [c,z_1] \;\; [[c,z_1],a] \;\;  [a,z_1], z_2, \cdots, z_j]
\end{equation}
Theferore,  (\ref{commutator-abc}) and (\ref{commutator-acb}) coincide, and we have $$[abc,z_1,\cdots, z_j]=[acb,z_1,\cdots,z_j] = [[ac,z_1][b,z_1],z_2,\cdots,z_j].
$$
Applying that $[b,z_1, \cdots, z_i]$ is $(j+1-i)$-central, we obtain
$$
[abc,z_1,\cdots, z_j]= [ac,z_1,\cdots,z_j][b,z_1,\cdots,z_j].
$$
 \end{proof}

We adapt this lemma to our purposes.
\begin{lemma}\label{lema-homo} Let $X, X', Y$, $a$, $z$ be elements of a group $G$.
Given $j\geq 1$, if $[ Y,[a,X']]$ is $j$-central and $[a,Y]$ is $(j+1)$-central, then, for all $z_1,...,z_j\in G$,
$$[a,XYX',z_1,...,z_j] = [[a,XX'][a,Y],z_1,...,z_j]= [a,XX',z_1,...,z_j] [a,Y,z_1,...,z_j].$$
\end{lemma}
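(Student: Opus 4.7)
The plan is to expand $[a,XYX']$ into a product of commutators using the identities $[a,uv]=[a,u][u,[a,v]][a,v]$ and $[uv,z]=[v,z][[v,z],u][u,z]$ recalled just before Lemma~\ref{centrals}, then to recognize which factors have enough centrality to be absorbed or to factor out under the iterated commutator $[-,z_1,\dots,z_j]$. The two hypotheses are precisely tailored for this: they make the ``extra'' factors $j$-central (so Lemma~\ref{centrals}(1) deletes them) and make the factor $[a,Y]$ $(j+1)$-central (so Lemma~\ref{centrals}(2) splits it off).

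Concretely, I would first apply $[a,uv]=[a,u][u,[a,v]][a,v]$ with $u=XY$, $v=X'$ and then again with $u=X$, $v=Y$, followed by $[uv,z]=[v,z][[v,z],u][u,z]$ applied to $[XY,[a,X']]$. This yields
$$
[a,XYX']=[a,X]\cdot[X,[a,Y]]\cdot[a,Y]\cdot[Y,[a,X']]\cdot[[Y,[a,X']],X]\cdot[X,[a,X']]\cdot[a,X'].
$$
Next I would classify each factor. By hypothesis $[Y,[a,X']]\in Z_jG$; taking one further commutator with $X$ pushes $[[Y,[a,X']],X]$ into $Z_{j-1}G\subset Z_jG$. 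Similarly, $[a,Y]\in Z_{j+1}G$ forces $[X,[a,Y]]\in Z_jG$. Thus three of the seven factors are $j$-central.

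The third step is to apply Lemma~\ref{centrals}(1) three times to delete the three $j$-central factors from inside $[-,z_1,\dots,z_j]$, leaving
$$
[a,XYX',z_1,\dots,z_j]=[\,[a,X]\cdot[a,Y]\cdot[X,[a,X']]\cdot[a,X'],\,z_1,\dots,z_j\,].
$$
Now Lemma~\ref{centrals}(2) applied to the $(j+1)$-central factor $[a,Y]$ splits it off as a separate iterated commutator, and the remaining product $[a,X]\cdot[X,[a,X']]\cdot[a,X']$ is exactly the expansion of $[a,XX']$ via $[a,uv]=[a,u][u,[a,v]][a,v]$ with $u=X$, $v=X'$. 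Reassembling and using Lemma~\ref{centrals}(2) once more in the reverse direction gives both equalities in the statement.

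The only real difficulty is bookkeeping: one has to track the centrality degree of each of the seven factors produced by the double expansion and verify that the two hypotheses are calibrated exactly so that every ``parasitic'' factor lands in $Z_jG$ while $[a,Y]$ is the unique $(j+1)$-central factor that we need to peel off. Once the factors are correctly classified, the two parts of Lemma~\ref{centrals} do all the remaining work mechanically.
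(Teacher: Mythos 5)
Your proposal is correct and follows essentially the same route as the paper: the identical seven-factor expansion of $[a,XYX']$, deletion of the three $j$-central factors $[X,[a,Y]]$, $[Y,[a,X']]$, $[[Y,[a,X']],X]$ via Lemma~\ref{centrals}(1), and splitting off the $(j+1)$-central factor $[a,Y]$ via Lemma~\ref{centrals}(2), with the leftover product recombining into $[a,XX']$. The only (immaterial) difference is that the paper first moves $[a,Y]$ to the end of the product before splitting, whereas you split it off directly from its original position.
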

\begin{proof}
We have the following chain of equalities
$$[a,XYX',z_1,...,z_j]= [[a,XYX'],z_1,...,z_j]=$$
$$=[[a, X] \, [X, [a,Y]] \,  [a, Y] \,   [ Y,[a,X']] \,   [ [Y , [a,X']],X] \,   [ X ,[a,X']] \, [a,X'],z_1,...,z_j]=(*), $$
now we apply Lemma~\ref{centrals}  to get
$$
(*)=[[a, X] \,    [ X ,[a,X']] \, [a,X'] [a,Y],z_1,...,z_j]$$
$$=[[a,XX'][a,Y],z_1,...,z_j]= [a,XX',z_1,...,z_j] [a,Y,z_1,...,z_j]
$$
\end{proof}

\begin{lemma}\cite[Proposition 3.7]{Lib00}
\label{Libman-Lemma} Suppose that $G$ is nilpotent of nilpotency class  at most~$r$, i.e. $\Gamma^{r+1}G=1$. Then  for any choice of elements $a_1, \cdots, a_{r-1}\in G$ and any choice of $i$ $(1\leq i\leq r-1)$, the assignment  $x\mapsto [a_1,\cdots, a_i, x,a_{i+1}, \cdots, a_{r-1}]$ defines a homomorphism $G\to Z(G)$.\qed
\end{lemma}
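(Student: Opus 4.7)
The plan is to reduce the statement to Lemma~\ref{lema-homo}. Two things need to be verified: that the image of the assignment lies in $Z(G)$, and that the assignment respects multiplication in $x$.

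For centrality of the image: any left-normed commutator of $r$ elements of $G$ lies in $\Gamma^r G$, and the nilpotency hypothesis $\Gamma^{r+1} G = 1$ says $[\Gamma^r G, G] = 1$. Hence $\Gamma^r G \subseteq Z(G)$, so the map indeed takes values in $Z(G)$; as a byproduct, any two outputs commute, which we will use at the end.

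For the homomorphism identity, abbreviate $A = [a_1, \ldots, a_i] \in \Gamma^i G$, set $j = r - 1 - i$, and let $z_s = a_{i+s}$ for $s = 1, \ldots, j$. We want to establish
$$[A, xy, z_1, \ldots, z_j] = [A, x, z_1, \ldots, z_j] \cdot [A, y, z_1, \ldots, z_j].$$
Apply Lemma~\ref{lema-homo} with $a = A$, $X = 1$, $Y = x$, $X' = y$. Its two hypotheses are checked by tracking commutator weight: $[Y, [a, X']] = [x, [A, y]] \in \Gamma^{i+2} G$ is $j$-central because $[\Gamma^{i+2} G, G, \overset{(j)}{\ldots}, G] \subseteq \Gamma^{i+2+j} G = \Gamma^{r+1} G = 1$; similarly $[a, Y] = [A, x] \in \Gamma^{i+1} G$ is $(j+1)$-central because $\Gamma^{i+1+j+1} G = \Gamma^{r+1} G = 1$. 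The conclusion of Lemma~\ref{lema-homo} then reads
$$[A, xy, z_1, \ldots, z_j] = [A, y, z_1, \ldots, z_j] \cdot [A, x, z_1, \ldots, z_j],$$
and since both factors lie in $\Gamma^r G \subseteq Z(G)$ they commute, giving the desired identity.

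The boundary case $i = r - 1$ (so $j = 0$) lies outside the range $j \geq 1$ assumed by Lemma~\ref{lema-homo}, but follows immediately from the identity $[A, xy] = [A, x] \cdot [x, [A, y]] \cdot [A, y]$: with $A \in \Gamma^{r-1} G$, the element $[A, y] \in \Gamma^r G$ is central, so the middle factor is trivial and $[A, x], [A, y]$ commute. The main obstacle is thus not conceptual but book-keeping: one has to correctly match the commutator weights $i$, $i+1$, $i+2$ against the centrality depths $j$ and $j+1$ required by Lemma~\ref{lema-homo}; once this is done, the result drops out.
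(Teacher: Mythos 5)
Your proof is correct, and it takes a different route from the paper: the paper offers no argument at all for this lemma, simply importing it as \cite[Proposition 3.7]{Lib00}, where Libman proves it by a direct induction on commutator identities. What you do instead is derive it from the paper's own Lemma~\ref{lema-homo}, and the bookkeeping checks out: with $a=[a_1,\dots,a_i]\in\Gamma^iG$, $X=1$, $Y=x$, $X'=y$ and $j=r-1-i$, the hypotheses hold because $[x,[a,y]]\in\Gamma^{i+2}G$ and $[a,x]\in\Gamma^{i+1}G$, and $\Gamma^{i+2+j}G=\Gamma^{i+1+(j+1)}G=\Gamma^{r+1}G=1$, so these elements are $j$-central and $(j+1)$-central respectively; the conclusion gives the product in the reversed order, which you correctly repair by noting both factors lie in $\Gamma^rG\subseteq Z(G)$; and you rightly notice that the case $i=r-1$ (i.e.\ $j=0$) falls outside the scope of Lemma~\ref{lema-homo} and dispose of it with the identity $[a,xy]=[a,x][x,[a,y]][a,y]$. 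This is not circular, since Lemma~\ref{lema-homo} rests only on Lemma~\ref{centrals} and not on the present statement. The payoff of your approach is a self-contained proof inside the paper's commutator machinery (showing in effect that Lemma~\ref{lema-homo} generalizes Libman's statement), whereas the paper's citation keeps the exposition short and defers the commutator calculus to \cite{Lib00}.
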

This result was used in \cite{Lib00} to prove that if $\eta: H \to G$ is a localization and $G$ is nilpotent, then the nilpotency classes of $G$ and $\eta(H)$ coincide. The proof of our Theorem \ref{trivial-galois} needs the following:

\begin{lemma}
\label{repetition}
Let $\eta: H\to G$ be an envelope with $\Gal(\eta)=\{\Id_G\}$.
Let $\psi_1, \psi_2:H\to G$ be two homomorphisms such that
$\psi_1\eta=\psi_2\eta$. Suppose that, for some $i,j \geq 0$ and all $x\in G$,
\begin{equation}\label{imas1j} [\Gamma^{i+1}G, \psi_1(x)\psi_2(x)^{-1}, G, \stackrel{(j}{\cdots}, G=1\end{equation} and \begin{equation}\label{ijmas1}
[\Gamma^{i}G, \psi_1(x)\psi_2(x)^{-1}, G, \stackrel{(j+1}{\cdots}, G]=1.\end{equation}.
Then, $[\Gamma^{i}G, \psi_1(x)\psi_2(x)^{-1}, G, \stackrel{(j}{\cdots}, G]=1$, for all $x\in G$.
\end{lemma}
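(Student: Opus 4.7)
The plan is to fix $b\in \Gamma^i G$ and $z_1,\dots,z_j\in G$ and to study the function
$$\theta(x):=[b,\,\psi_1(x)\psi_2(x)^{-1},\,z_1,\dots,z_j],\qquad x\in G.$$
Hypothesis (\ref{ijmas1}) says $\theta(x)\in Z_1 G=ZG$ for every $x$, so once $\theta$ is shown to be a homomorphism $G\to ZG$, the map $\phi(x):=x\,\theta(x)^{-1}$ is a well-defined endomorphism of $G$. Since $\psi_1\eta=\psi_2\eta$ gives $\theta\circ\eta\equiv 1$, we have $\phi\eta=\eta$; the envelope property (Proposition \ref{charac-env}) then forces $\phi\in\Aut(G)$, and $\Gal(\eta)=\{\Id_G\}$ forces $\phi=\Id_G$, i.e.\ $\theta\equiv 1$. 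Since $b\in\Gamma^iG$ and $z_1,\dots,z_j\in G$ are arbitrary, this is exactly the desired conclusion.

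The only real work is in establishing that $\theta$ is a homomorphism. Set $f(x):=\psi_1(x)\psi_2(x)^{-1}$; it is not a homomorphism, but a direct computation yields the key identity
$$f(xy)=\psi_1(x)\,f(y)\,\psi_2(x)^{-1}.$$
This is tailor-made for Lemma \ref{lema-homo}: with $a=b$, $X=\psi_1(x)$, $Y=f(y)$, $X'=\psi_2(x)^{-1}$ one has $XX'=f(x)$ and $XYX'=f(xy)$, so the conclusion of that lemma reads
$$\theta(xy)=[b,f(x),z_1,\dots,z_j]\cdot[b,f(y),z_1,\dots,z_j]=\theta(x)\theta(y).$$
The two side conditions of Lemma \ref{lema-homo} are provided by our hypotheses: $[b,f(y)]$ is $(j+1)$-central by (\ref{ijmas1}), and $[f(y),[b,\psi_2(x)^{-1}]]$ is $j$-central because $[b,\psi_2(x)^{-1}]\in\Gamma^{i+1}G$ and we can apply (\ref{imas1j}) (using that $Z_j G$ is closed under inversion).

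The main obstacle is precisely this commutator manipulation, which has to convert the non-multiplicative behaviour of $f$ into the homomorphic law for $\theta$; that is exactly the purpose of Lemma \ref{lema-homo}, and it reduces the task to checking that its two centrality hypotheses are supplied by (\ref{imas1j}) and (\ref{ijmas1}). The boundary case $j=0$ lies outside the scope of Lemma \ref{lema-homo}, but it can be handled in the same spirit by a direct use of the expansion $[b,uvw]=[b,u]\cdot u[b,v]u^{-1}\cdot uv[b,w](uv)^{-1}$ together with $[b,f(y)]\in ZG$ (from (\ref{ijmas1})) and the fact that $[b,\psi_2(x)^{-1}]$ commutes with $f(y)$ (from (\ref{imas1j})), yielding again $\theta(xy)=\theta(x)\theta(y)$, after which the envelope argument above closes the proof.
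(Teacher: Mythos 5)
Your proof is correct and follows essentially the same route as the paper: fix $b\in\Gamma^iG$ and the $z_k$, use Lemma~\ref{lema-homo} with exactly the same substitution $X=\psi_1(x)$, $Y=\psi_1(y)\psi_2(y)^{-1}$, $X'=\psi_2(x)^{-1}$ (the centrality hypotheses coming from (\ref{imas1j}) and (\ref{ijmas1})) to show the central-valued commutator map is a homomorphism, and then invoke the envelope property and $\Gal(\eta)=\{\Id_G\}$ to force it to be trivial. The only cosmetic differences are that you perturb by $\theta(x)^{-1}$ instead of $\theta(x)$ and that you handle the $j=0$ boundary case explicitly, a point the paper's proof leaves implicit.
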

\begin{proof} Fix $a\in \Gamma^iG$ and $z_1,\cdots, z_j \in G$.
Let $\xi:G\to G$ be the map given by
$$\xi(x):=x[a, \psi_1(x)\psi_2(x)^{-1},  z_1, \cdots, z_j].$$
By assumption (\ref{ijmas1}), $[a, \psi_1(x)\psi_2(x)^{-1}, z_1, \cdots, z_j]$ is central.
We next show that $\xi$ is a homomorphism. Given $x$ and $y$ of $G$, we have
$$
\xi(xy)=xy\underbrace{[a, \psi_1(x)\psi_1(y)\psi_2(y)^{-1}\psi_2(x)^{-1}, z_1, \cdots, z_j]}.
$$
We apply Lemma~\ref{lema-homo} to $X= \psi_1(x)$, $Y=\psi_1(y)\psi_2(y)^{-1}$ and $X'=\psi_2(x)^{-1}$. The underbraced part is
$$
[a, XYX',  z_1, \cdots, z_j] = [[a, XYX'], z_1, \cdots, z_j]=(*)
$$
The element 
$[Y,[a,X']]$ is $j$-central because of (\ref{imas1j}), 
and $[a, Y]$ is $(j+1)$-central, by (\ref{ijmas1}). Therefore we can apply Lemma~\ref{lema-homo} to get:
$$
(*)=[[a, XX'] [a, Y],  z_1, \cdots, z_j] = [a, XX',  z_1, \cdots, z_j] [a, Y, z_1, \cdots, z_j].
$$
This proves that $\xi$ is a homomorphism, which satisfies $\xi\eta=\eta$ because $\psi_1\eta=\psi_2\eta$, hence
 $\xi\in \Gal(\eta)=\{\Id_G\}$. This concludes
$[\Gamma^{i}G, \psi_1(x)\psi_2(x)^{-1}, G, \stackrel{(j}{\cdots}, G]=1$, for all $x\in G$.
\end{proof}
\begin{theorem}
\label{trivial-galois}
Let $\eta: H\to G$ be an envelope. Suppose that $H$ is abelian or $G$ is nilpotent. Then the following are equivalent:
\begin{enumerate}
\item[(a)] $\Gal(\eta)=\{{\rm Id}_G\}$.

\item[(b)] $\eta: H\to G$ is a localization.

\end{enumerate}
\end{theorem}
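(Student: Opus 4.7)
The direction (b) $\Rightarrow$ (a) is Proposition~\ref{localizations-are-envelopes}. For the converse, observe that the envelope property already makes $\eta^*\colon\End(G)\to\Hom(H,G)$ surjective, so it suffices to prove injectivity. Take $\psi_1,\psi_2\in\End(G)$ with $\psi_1\eta=\psi_2\eta$ and set $g(x):=\psi_1(x)\psi_2(x)^{-1}$. The goal is $g\equiv 1$.

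The overall strategy is to package $g$ as an element of $\Gal(\eta)$. Note the cocycle identity
\[
g(xy)=\psi_1(x)\,g(y)\,\psi_2(x)^{-1},
\]
which collapses to $g(xy)=g(x)g(y)$ as soon as $g(y)\in Z(G)$. So if we can prove $g(x)\in Z(G)$ for every $x\in G$, then $g$ is a homomorphism $G\to Z(G)$, the map $\xi(x):=x\,g(x)$ is an endomorphism of $G$, and since $g\circ\eta\equiv 1$ we have $\xi\eta=\eta$, whence $\xi\in\Gal(\eta)=\{\Id_G\}$ and therefore $g\equiv 1$.

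All the work, then, is in establishing the centrality of $g(x)$. Under the hypothesis that $G$ is nilpotent of class $r$ (so $\Gamma^{r+1}G=1$), my plan is a downward induction on $i+j$ of the family of statements
\[
P(i,j)\colon\quad [\Gamma^i G,\;g(x),\;G,\stackrel{(j)}{\cdots},G]=1 \quad(\forall x\in G).
\]
The base cases $i+j\geq r$ are automatic, since the iterated commutator already lies in $\Gamma^{i+1+j}G\subseteq\Gamma^{r+1}G=1$. Lemma~\ref{repetition} provides precisely the induction step: its hypotheses (\ref{imas1j}) and (\ref{ijmas1}) are the statements $P(i+1,j)$ and $P(i,j+1)$, and its conclusion is $P(i,j)$. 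Descending all the way to $P(1,0)$ yields $[G,g(x)]=1$, i.e.\ $g(x)\in Z(G)$.

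Under the alternative hypothesis that $H$ is abelian, after invoking Proposition~\ref{reducction-injective-surjective} to replace $\eta$ by $\im\eta\hookrightarrow G$ (so that $\eta(H)\leq G$ is now an abelian subgroup on which $g$ vanishes), I would run a parallel downward induction, this time using the abelianness of $\eta(H)$ in place of nilpotency of $G$ to trivialise the base level of commutators and again force $g(x)\in Z(G)$. The principal obstacle throughout is the combinatorial bookkeeping required to verify the centrality hypotheses in Lemma~\ref{lema-homo} at each stage of the induction; this is precisely what the commutator identities collected in Lemmas~\ref{centrals} and~\ref{lema-homo} are designed to tame, so that, once they are in place, the two-parameter induction runs cleanly. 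This is the ``tricky play of iterated commutators'' alluded to in the introduction.
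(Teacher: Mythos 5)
Your reduction of (a)$\Rightarrow$(b) to the centrality of $g(x)=\psi_1(x)\psi_2(x)^{-1}$, and your treatment of the nilpotent case, are correct and essentially the paper's own argument: the two-parameter downward induction in which $P(i+1,j)$ and $P(i,j+1)$ feed Lemma~\ref{repetition}, with base cases $i+j\geq r$ trivial because the commutator lies in $\Gamma^{i+j+1}G\subseteq\Gamma^{r+1}G=1$, is exactly how the paper descends to $[g(x),G]=1$, and your $\xi(x)=x\,g(x)$ argument then matches the paper's conclusion.

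The gap is in the case where only $H$ is assumed abelian. Your plan to ``run a parallel downward induction, using the abelianness of $\eta(H)$ in place of nilpotency of $G$ to trivialise the base level of commutators'' does not work: the statements $P(i,j)$ concern iterated commutators $[\Gamma^iG,\,g(x),\,G,\dots,G]$ of elements of $G$, and if $G$ is not assumed nilpotent there is no level $i+j$ at which these vanish automatically; abelianness of the subgroup $\eta(H)$ gives no control whatsoever over $\Gamma^iG$, so the descent has no starting point. The paper handles this case by a different and much shorter device: for $h\in H$, abelianness of $H$ gives $c_{\eta(h)}\eta=\eta$ for the conjugation $c_{\eta(h)}$, and since $\eta$ is an envelope with trivial Galois group any endomorphism fixing $\eta$ is the identity, so $c_{\eta(h)}=\Id_G$, i.e.\ $\eta(h)\in Z(G)$; but then $c_g\eta=\eta$ for every $g\in G$, whence $c_g=\Id_G$ and $G$ is abelian, after which centrality of $g(x)$ is automatic and your $\xi$-argument finishes. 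As written, your proposal does not prove the abelian half of the theorem; you need an argument of this conjugation type (or some other way of first forcing $G$ to be abelian or nilpotent) before any commutator induction can be launched.
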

\begin{proof} Proposition \ref{localizations-are-envelopes} gives that (b) implies (a).
To prove that (a) implies (b), let $\psi_1$ and $\psi_2$ be endomorphisms of $G$ such that $\psi_1 \eta = \psi_2\eta$. We want to show that $\psi_1=\psi_2$. This equality easily follows if the element $\psi_1(x)\psi_2(x)^{-1}$ is central. Indeed, in that case the map $\xi: x\mapsto x\psi_1(x)\psi_2(x)^{-1}$  defines an element of $\Gal(\eta)$. Hence $\xi={\rm Id}_G$ by (a), which implies $\psi_1=\psi_2$ as desired.

Case $H$ abelian: For each $h\in H$, we have $c_{\eta (h)} \eta=\eta$ which forces $c_{\eta(h)}={\rm Id}_G$ by (a). Therefore, for each $g\in G$, $c_g{\eta(h)} = g \eta(h) g^{-1}= \eta(g)$,  that is,  $c_g\eta = \eta$. Again by (a), it follows $c_g={\rm Id}_G$ for all $g\in G$, that is,  $G$ is abelian.

Case $G$ nilpotent of class at most $r\geq 2$ (that is $[G, \stackrel{(r+1}{\cdots}, G]=1$): In particular, by Lemma~\ref{repetition} we get 
$$[ G, \stackrel{(i}{\cdots}, G, \psi_1(x)\psi_2(x)^{-1}, G, \stackrel{(j}{\cdots}, G]=1,$$
whenever $i+j\geq r-1$.
Iterating this argument we get that $[\psi_1(x)\psi_2(x)^{-1},G]=1$. Thus $\psi_1(x)\psi_2(x)^{-1}$ is central.
\end{proof}







\section{Some envelopes and open questions}
\label{examples}
We next give some examples of envelopes, results and open questions.
We deal with envelopes according to Proposition \ref{charac-env}, having in mind related examples and problems of localizations of groups. Although envelopes do not share as many good properties as localizations, we will still have interesting questions to treat.
Recall for example that if $H$ is abelian and  $H \hookrightarrow G$ is a localization, then $G$ is abelian
  (see e.g. \cite{Lib00} or \cite[Theorem 2.2]{Cas00}). This fails for arbitrary envelopes.
\begin{example}
\label{dihedral-group}
{\rm
The inclusion $i: C_p\hookrightarrow D_{2p}$
of the cyclic group of order a prime $p>2$ into the
dihedral group of order $2p$ is an envelope,
with $\Gal(i)=C_p$.
}
\end{example}

Group localizations of $\Z$
(and more generally, of any commutative ring with unit) have
attracted much attention in the literature.
They were called {\it $E$-rings} by Schultz (see \cite{Sch}).
These are commutative rings $A$ with identity 1, for which the evaluation at~1,
is an isomorphism $\End(A)\cong A$ of rings.
The following is an envelope of $\Z$, similar to \ref{dihedral-group}.

\begin{example}
{\rm
Consider $G =\langle  x, y : y^2=1, x^y=x^{-1} \rangle$ the infinite dihedral group. Then $\langle x \rangle \hookrightarrow G$ is an envelope, with $\Z$ as its Galois group.
}
\end{example}
One could try to construct either countable or arbitrarily large envelopes of $\Z$
with prescribed Galois group $C_p$ (or another fixed group), either using Corner's techniques or
some adjusted Shelah's black box.

\begin{question}
{\rm Are there arbitrarily large envelopes of $\Z$ with any prescribed Galois group?}
\end{question}
Of course, variations of this problem can be treated in parallel to constructions of absolute $E$-rings \cite{GHS11}, or the most recent $\aleph_k$-free $E$-rings.

Given a localization $H \to G$ of a nilpotent group $H$,
is $G$ nilpotent? This is still an open and very difficult problem in localization theory.
We recall that this is only known to be true for nilpotent groups $H$ of nilpotency class 2 (Libman \cite[Theorem 3.3]{Lib00}, Casacuberta \cite[Theorem 2.3]{Cas00}) and also for finite $p$-groups of nilpotency class 3 by Aschbacher \cite{Aschbacher}.
This problem is even open for finite $p$-groups $H$, where $p$ is any prime.
For envelopes we think that the following question would be interesting
 and perhaps easy to solve.

\begin{question}
{\rm
When is an inclusion $H \hookrightarrow G$ of finite $p$-groups an envelope? Characterize them, give examples.
}
\end{question}


We know from \cite{RSV05} that if $H\to G$ is a localization
between two finite groups, and $H$ is perfect, then $G$ is perfect too.
This is no longer true for envelopes.

\begin{example}
{\rm
The inclusion  $A_n \hookrightarrow S_n$ of the alternating group into the symmetric group
is an envelope for $n\geq 5$.
}
\end{example}
In \cite{RST02} it is given a criterium that permits to know when an inclusion
of finite simple groups is a localization. For envelopes we have
the following characterization.
\begin{theorem}
\label{characterization-envelopes-finite-simple-groups}
Let $\varphi:H\hookrightarrow G$ be an inclusion of non-abelian finite simple groups.
Then $\varphi$ is an envelope if and only if:
\begin{enumerate}
\item Every automorphism $\alpha: H \ra H$ extends to a (non-necessarily unique)
automorphism
$i(\alpha): G \ra G$.
\item Any subgroup of $G$ which is isomorphic to $H$ is conjugate to
$H$ in $\Aut(G)$.
\end{enumerate}
Furthermore, $Gal(\varphi)$ is isomorphic to the centralizer subgroup of $H$ in $G$.
\end{theorem}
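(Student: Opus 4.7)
The argument splits naturally into the equivalence and the identification of $\Gal(\varphi)$, with the non-abelian simple hypothesis on both $H$ and $G$ providing the crucial simplifications: every nonzero element of $\End(G)$ lies in $\Aut(G)$, and every nonzero element of $\Hom(H,G)$ is injective. In particular, the ``preimage of $\varphi$ lies in $\Aut(G)$'' clause of Proposition~\ref{charac-env}(1) is automatic, since any $f\in\End(G)$ with $f\circ\varphi=\varphi$ must be nontrivial (as $\varphi\neq 0$), hence an automorphism. So the envelope condition reduces to surjectivity of $\varphi^*:\End(G)\to\Hom(H,G)$.

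For the forward direction, assume this surjectivity: given $\alpha\in\Aut(H)$, a lift of $\varphi\alpha$ to $\End(G)$ is nonzero, hence an automorphism of $G$ restricting to $\alpha$ on $H$, giving~(1). Given $K\leq G$ with $K\cong H$, pick an isomorphism $\sigma:H\to K$ and lift $\iota_K\circ\sigma:H\to G$ to some $\beta\in\Aut(G)$; then $\beta(H)=K$, yielding~(2). For the converse, assume (1) and~(2) and let $\psi\in\Hom(H,G)$ be nonzero. Then $\psi$ is injective, and (2) provides $\gamma\in\Aut(G)$ with $\gamma(\psi(H))=H$, making $\gamma\psi$ an injective endomorphism of the finite group $H$, hence an automorphism. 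By~(1), $\gamma\psi$ extends to some $\tilde{\alpha}\in\Aut(G)$, and $\gamma^{-1}\tilde{\alpha}\in\Aut(G)$ is the desired lift of~$\psi$.

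For the Galois group, the conjugation map $c:C_G(H)\to\Gal(\varphi)$, $g\mapsto c_g$, is well-defined (since $c_g|_H=\mathrm{id}$ when $g\in C_G(H)$) and injective (since $Z(G)=1$). Using~(1), the sequence
\[
1\to\Gal(\varphi)\to N_{\Aut(G)}(H)\to\Aut(H)\to 1
\]
is exact, and combined with the $\Aut(G)$-transitivity from~(2) this yields the order formula $|\Gal(\varphi)|=|N_{\Aut(G)}(H)|/|\Aut(H)|$ (by counting injective homomorphisms $H\to G$ both as fibers of $\varphi^*$ of size $|\Gal(\varphi)|$ and as subgroups $K\cong H$ parametrized by $\Aut(H)$).

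The main obstacle is then to identify this cardinality with $|C_G(H)|$, equivalently to show every $f\in\Gal(\varphi)$ is inner. Comparing the exact sequence above with its inner analogue $1\to C_G(H)\to N_G(H)\to N_G(H)/C_G(H)\to 1$ via the natural injective vertical maps induces, by a diagram chase, a three-term exact sequence
\[
1\to \Gal(\varphi)/C_G(H)\to N_{\Aut(G)}(H)/N_G(H)\to \Aut(H)/(N_G(H)/C_G(H))\to 1,
\]
so that $\Gal(\varphi)\cong C_G(H)$ reduces to injectivity of the rightmost map. I would extract this injectivity by combining conditions (1) and~(2) with the rigidity $Z(\Aut(G))=1$ (a consequence of $G$ being non-abelian simple), ruling out outer automorphisms of $G$ whose restriction to $H$ is inner but that are not themselves inner.
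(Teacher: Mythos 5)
Your treatment of the equivalence is correct, and it is in substance the argument the paper intends: the paper's own proof is only the remark that one argues as in \cite{RST02}, and your reduction (simplicity forces every nontrivial homomorphism $H\to G$ to be injective and every nontrivial endomorphism of $G$ to be an automorphism, so by Proposition~\ref{charac-env} being an envelope is exactly surjectivity of $\varphi^*$), followed by your two implications, is that argument.

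The genuine gap is in the ``furthermore'' clause. By definition $\Gal(\varphi)=\{f\in\Aut(G)\ :\ f|_H=\mathrm{id}_H\}$, i.e. the pointwise stabilizer $C_{\Aut(G)}(H)$, so identifying it with $C_G(H)$ is precisely the statement that every automorphism of $G$ fixing $H$ pointwise is inner, induced by an element of $C_G(H)$. Your diagram chase does not advance this: as you note, injectivity of the rightmost map of your three-term sequence is \emph{equivalent} to $\Gal(\varphi)=C_G(H)$, and the closing sentence --- that this injectivity should follow from (1), (2) and $Z(\Aut(G))=1$ --- is an assertion, not an argument; triviality of $Z(\Aut(G))$ says nothing about automorphisms of $G$ that fix the proper subgroup $H$ pointwise. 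Moreover, no purely formal deduction from (1) and (2) can close this step: for the subfield inclusion $H=\mathrm{PSL}_2(p)\subset G=\mathrm{PSL}_2(p^2)$, $p\geq 7$, conditions (1) and (2) hold (extensions come from $\mathrm{PGL}_2(p)\leq \mathrm{PGL}_2(p^2)$, and by Dickson's classification all subgroups of $G$ isomorphic to $H$ are subfield subgroups, conjugate under $\Aut(G)$), yet the field automorphism $x\mapsto x^p$ is an \emph{outer} automorphism of $G$ fixing $H$ pointwise while $C_G(H)=1$. So what your argument actually establishes is only $C_G(H)\leq \Gal(\varphi)=C_{\Aut(G)}(H)$, together with the order formula; passing from the pointwise stabilizer in $\Aut(G)$ to the centralizer in $G$ needs genuinely more input (the specific analysis imported from \cite{RST02}, or an extra hypothesis such as completeness of $G$ as in the corollary following the theorem), and your sketch does not supply it.
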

\begin{proof}
The proof is basically the same as in \cite[Theorem 1.4]{RST02}.
Note that for localizations the assignation $\beta \mapsto i(\beta)$
is unique and respects compositions giving rise to a well defined
homomorphism $\Aut(H) \to \Aut(G)$.
\end{proof}

This characterization restricted to complete groups has a simpler formulation (cf. \cite[Corollary 1.6]{RST02}).  Recall that a group is {\it complete} if it has trivial center and every automorphism is a conjugation.
\begin{corollary}
Let $H$ be a non-abelian simple subgroup of a finite simple group $G$ and let $i:H\hookrightarrow G$ be the inclusion. Assume that $H$ and $G$ are complete groups. Then $i$ is an envelope if and only if any subgroup of $G$ which is isomorphic to $H$ is conjugate to $H$. \qed
\end{corollary}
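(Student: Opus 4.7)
The plan is to deduce the corollary directly from Theorem \ref{characterization-envelopes-finite-simple-groups} by showing that, under the completeness hypothesis on both $H$ and $G$, condition (1) of that theorem becomes automatic and condition (2) reduces to ordinary conjugation in $G$.

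First, I would check condition (1). Since $H$ is complete, every automorphism $\alpha\colon H\to H$ is inner, that is, $\alpha=c_h$ for some $h\in H$, where $c_h$ denotes conjugation by $h$. Now view $h$ as an element of $G$; then conjugation by $h$ defines an automorphism $c_h\colon G\to G$ whose restriction to $H$ is precisely $\alpha$ (because $H$ is closed under conjugation by its own elements). Thus condition (1) of Theorem \ref{characterization-envelopes-finite-simple-groups} is satisfied for free, using only completeness of $H$.

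Next, I would address condition (2). Since $G$ is complete, $\Aut(G)=\mathrm{Inn}(G)$, so every automorphism of $G$ is conjugation by some element $g\in G$. Consequently, two subgroups $K,K'\subseteq G$ are conjugate in $\Aut(G)$ if and only if there exists $g\in G$ with $K'=gKg^{-1}$, i.e.\ if and only if they are conjugate inside $G$ in the usual sense. Therefore condition (2) of Theorem \ref{characterization-envelopes-finite-simple-groups} becomes exactly the statement that any subgroup of $G$ isomorphic to $H$ is conjugate to $H$ in $G$.

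Combining these two observations, Theorem \ref{characterization-envelopes-finite-simple-groups} yields the equivalence claimed in the corollary. The proof involves essentially no obstacle: the only step requiring a moment of attention is to verify that an inner automorphism $c_h$ of $H$ extends to $G$ via the same element $h\in H\subseteq G$, which is immediate.
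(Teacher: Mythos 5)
Your proposal is correct and is exactly the intended derivation: the paper states the corollary with no written proof precisely because, as you observe, completeness of $H$ makes condition (1) of Theorem \ref{characterization-envelopes-finite-simple-groups} automatic (inner automorphisms of $H$ extend by conjugating in $G$ by the same element), while completeness of $G$ turns conjugacy in $\Aut(G)$ into ordinary conjugacy in $G$, matching the formulation of \cite[Corollary 1.6]{RST02}. No gaps to report.
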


Notice that the equivalences of Theorem \ref{trivial-galois} also hold for finite simple groups.
\begin{corollary}
An envelope $i: H\hookrightarrow G$
between two non-abelian finite simple groups is a localization if and only if
it has trivial Galois group. \qed
\end{corollary}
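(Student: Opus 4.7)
The plan is to handle the two implications separately. For (b)$\Rightarrow$(a), there is nothing to do: Proposition~\ref{localizations-are-envelopes} already tells us that any localization is an envelope with trivial Galois group. The content lies entirely in proving (a)$\Rightarrow$(b).

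Assuming $\Gal(i)=\{\Id_G\}$, I would aim to show that $i^*\colon \End(G)\to \Hom(H,G)$ is bijective. Surjectivity is built into the envelope hypothesis, so I would only need to establish injectivity. I would take $\phi_1,\phi_2\in \End(G)$ with $\phi_1 i=\phi_2 i$ and argue that $\phi_1=\phi_2$.

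The key structural input is that every endomorphism of a non-abelian finite simple group is either trivial or an automorphism: $\Ker(\phi)$ is normal and therefore equals $1$ or $G$, and an injective endomorphism of a finite group is automatically bijective. A short case analysis then suffices. If $\phi_1=1$, then $\phi_2 i=1$, so $i(H)\subseteq \Ker(\phi_2)$; since $i(H)\neq 1$ (as $H$ is non-abelian simple), $\phi_2$ has non-trivial kernel and must itself be trivial, yielding $\phi_1=\phi_2$. Otherwise $\phi_1$, and hence $\phi_2 i=\phi_1 i$, is injective, forcing $\phi_2$ to be an automorphism as well. Then $\phi_1^{-1}\phi_2\in \Aut(G)$ restricts to the identity on $i(H)$, so it lies in $\Gal(i)=\{\Id_G\}$, giving $\phi_1=\phi_2$.

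I do not expect any real obstacle: the argument is an elementary case distinction. The simplicity of $G$ plays the role that the abelian/nilpotent hypothesis played in Theorem~\ref{trivial-galois}, by severely restricting the endomorphism monoid of $G$ and bypassing entirely the commutator identities of Section~\ref{envelopes-covers-trivial-Galois-group}.
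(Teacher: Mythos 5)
Your proof is correct, and it is worth noting that it supplies an argument the paper leaves implicit: the corollary is stated with no proof beyond the remark that the equivalences of Theorem~\ref{trivial-galois} persist for finite simple groups, the intended route being Theorem~\ref{characterization-envelopes-finite-simple-groups} (which identifies $\Gal(i)$ with the centralizer $C_G(H)$) combined with the criterion of \cite{RST02} characterizing which inclusions of finite simple groups are localizations -- trivial centralizer being exactly the extra condition that upgrades an envelope to a localization. You instead argue directly and more elementarily: surjectivity of $i^*\colon\End(G)\to\Hom(H,G)$ comes with the envelope hypothesis (Proposition~\ref{charac-env}), and injectivity follows from the fact that $\End(G)=\{1\}\cup\Aut(G)$ for $G$ non-abelian finite simple, plus the case analysis you give; the final step $\phi_1^{-1}\phi_2\in\Gal(i)=\{\Id_G\}$ is where triviality of the Galois group enters. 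This buys a self-contained proof needing neither the centralizer computation nor the classification-flavoured input of \cite{RST02}, while the paper's route places the corollary inside the RST02 framework and makes the ``Galois group $=$ centralizer'' dictionary explicit. One cosmetic point: for the implication ``localization $\Rightarrow$ trivial Galois group'' you cite Proposition~\ref{localizations-are-envelopes}, which concerns coaugmentations of localization functors; for a localization map $i$ in the sense of the introduction it is even quicker to note that injectivity of $i^*$ applied to $f i=i=\Id_G\, i$ gives $f=\Id_G$ directly.
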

In \cite{RST02} one can find many examples of envelopes,
that are not localizations, since the centralizer fails to be trivial.
The authors also considered the concept of  {\it rigid components} of non-abelian finite simple groups  defined by the following equivalence relation: two such groups are related if there is a zig-zag of  monomorphisms which are localizations connecting them. Parker--Saxl \cite{PS06}, completing results of \cite{RST02}, showed that all (non-abelian) finite simple groups lie in the same rigid component, except the ones of the form $PSp_{4}(p^{2^c})$, with $p$ an odd prime and $c>0$,
which are isolated. This fact motivates the following

\begin{question}
{\rm
Are all non-abelian finite simple groups lying in the same {\it weak rigid component}? Two groups are in the same weak rigid component if there is a zig-zag of monomorphisms connecting them which are envelopes.
}
\end{question}

G\"obel--Rodr\'{\i}guez--Shelah \cite{GRS02} and G\"obel--Shelah \cite{GS02} found out that every finite simple group admits arbitrarily large localizations.

\begin{question}
{\rm
Which finite simple groups admit arbitrarily large envelopes
with a prescribed Galois group?
}
\end{question}





\setlength{\baselineskip}{0.5cm}


\vskip 0.5 cm

\setlength{\baselineskip}{0.6cm}

Sergio Estrada

Departamento de Matem\'aticas

Universidad de Murcia

Campus de Espinardo, Espinardo (Murcia) E-30100, Murcia, Spain, e-mail: {\tt sestrada@um.es}

\bigskip\noindent

Jos\'e L. Rodr\'{\i}guez

Departamento de Matem\'aticas

Universidad de Almer\'{\i}a

La Ca\~nada de San Urbano, s/n, E-04120 Almer\'{\i}a,
 Spain, e-mail: {\tt
jlrodri@ual.es}


\begin{thebibliography}{99}
\vspace*{0.5cm}





\bibitem{Aschbacher}
M. Aschbacher, {\it On a question of Farjoun}, Finite groups 2003, 1--27, Walter de Gruyter GmbH \& Co. KG, Berlin, 2004.

\bibitem{AS80}
M. Auslander, and S. O. Smal{\o} ,
{\it Preprojective modules over Artin algebras},
J. Algebra {\bf 66} (1980),  61--122.




\bibitem{BCFS13} M. Blomgren, W. Chach\'olski, E. Dror Farjoun, and Y. Segev, {\it Idempotent transformations of finite groups}, Adv. Math. {\bf 233} (2013), 56--86.

\bibitem{Bou77} A. P. Bousfield,  {\em Homotopy facatorization systems},
 J. Pure Appl. Algebra {\bf 9} (1977), 207--220.




\bibitem{BD06} J. Buckner and M. Dugas, {\it Co-local subgroups of abelian
groups}, pp. 29--37 in
Abelian groups, rings, modules and homological algebra; Proceedings in honor of
Enochs, Lect. Notes
Pure Appl. Math. {\bf 249}, Chapman \& Hill, Boca Raton, FL 2006.



\bibitem{Cas00} C. Casacuberta, {\it On structures preserved by idempotent
transformations of groups and homotopy types\/},
Igodt, Paul (ed.) et al., Crystallographic groups and their generalizations. II. Proceedings of the workshop, Katholieke Universiteit Leuven, Campus Kortrijk, Belgium, May 26-28, 1999. Providence, RI: American Mathematical Society (AMS). Contemp. Math. 262, 39-68 (2000).




\bibitem{CRT} C. Casacuberta, J. L. Rodr\'{\i}guez, J.-Y. Tai
{\it Localization of abelian Eilenberg--Mac Lane spaces of finite type},  Algebr. Geom. Topol., to appear.






\bibitem{CFGS07} W. Chach\'olski, E. Dror Farjoun, R. G\"obel, and Y. Segev,
{\it Cellular covers of divisible abelian groups\/},
Alpine perspectives on algebraic topology, 77--97,
Contemp. Math., 504, Amer. Math. Soc., Providence, RI, 2009.








\bibitem{E81} E. Enochs,  {\em Injective and flat covers,
envelopes and resolvents}, Israel J. Math. {\bf 39} (1981), 189--209.


\bibitem{E05}
E. Enochs and J. Rada, {\it Abelian groups which have trivial absolute coGalois
group},
Czech. Math. J. {\bf 55} (130) (2005), 433--437.















\bibitem{F97} E. Dror Farjoun, \emph{Cellular spaces, null spaces and
homotopy localization,} Lecture Notes in Math. {\bf 1622}
Springer-Verlag, Berlin--Heidelberg--New York 1996.


\bibitem{FGSS07}E. Dror Farjoun, R. G\"obel, Y. Segev, and S. Shelah,
{\it On kernels of cellular covers}, Groups Geom. Dyn. {\bf 1}
(2007), no. 4, 409--419.






\bibitem{GHS11} R. G\"obel, D. Herden, and S. Shelah, {\it Absolute E-rings}, Adv. Math. {\bf 226} (2011), 235--253.

\bibitem{GRS02} R. G\"obel, J. L. Rodr\'{\i}guez, and S. Shelah, {\em Large
localizations of finite simple groups}, J. Reine angew. Math. {\bf 550} (2002),
1--24.


\bibitem{GRS12} R. G\"obel, J. L. Rodr\'{\i}guez, and L. Str\"ungmann,
\emph{Cellular covers of cotorsion-free modules},
Fund. Math. {\bf 217} (2012), 211--231.

\bibitem{GS02}
R. G\"obel and S. Shelah,
\emph{Constructing simple groups for localizations},
Comm. Algebra
{\bf 30} (2002), 809--837.


\bibitem{GT12} R. G\"obel, J.Trlifaj, Approximations and Endomorphism Algebras of Modoules, 2nd revised and extended edition, de Gruyter Expositions in Mathematics 41, Vol.1 - Approximations, Vol.2 - Predictions, xxviii + 972 pages, W. de Gruyter, Berlin - Boston 2012.





\bibitem{He11} M. H\'ebert, {\it Weak reflections and weak factorization systems},
Appl. Categorical Structures. {\bf 9} (2011), 9--38.


\bibitem{Hill} P. Hill, {\it Abelian group pairs having a trivial coGalois
group}, Czechoslovak Math. J. {\bf 58},  (2008), 1069--1081.



\bibitem{Hi03} P. S. Hirschhorn, Model Categories and Their Localizations, Math.
Surveys Monographs, vol. 99, Amer. Math. Soc. Providence, 2003.


\bibitem{Lib00} A. Libman, {\em Cardinality and nilpotency of
localizations of groups and $G$-modules}, Israel J. Math. {\bf 117} (2000), 221--237.

\bibitem{PS06} C. Parker and J. Saxl, {\it Localization and finite simple
groups}, Israel J. Math. {\bf 153} (2006),
285--305.

\bibitem{Porst81} H. E. Porst, {\it Characterization of injective envelopes},
Cahiers de topologie et geometrie diff. cat. {\bf 22} (1981), 399--406.






\bibitem{RS00}
J. L. Rodr\'{\i}guez and D. Scevenels,
{\it Universal epimorphic equivalences for groups}, J. Pure Appl.
Algebra, {\bf 148} (2000), 309--316.





\bibitem{RST02} J. L. Rodr\'{\i}guez, J. Scherer, and J. Th\'evenaz,
{\em Finite simple groups and localization},
in Israel J. Math. {\bf 131} (2002), 185--202.




\bibitem{RSV05} J. L. Rodr\'{\i}guez, J. Scherer, and A. Viruel, \emph{ Preservation of
perfectness and acyclicity. Berrick and Casacuberta's universal acyclic space
localized at a set of primes}, Forum Math. {\bf 17} (2005), 67--75.





\bibitem{Sch}
P. Schultz,
The endomorphism ring of the additive group of a ring,
{\em J. Austral. Math. Soc. Ser. A}
{\bf 15} (1973), 60--69.




\bibitem{Xu96}
J. Xu, {\it Flat Covers of Modules}, Lecture Notes in Mathematics, Vol. 1634,
Springer-
Verlag, New York/Berlin, 1996.

\end{thebibliography}
\end{document}